\newtheorem{theorem}{Theorem}[section]
\newtheorem{lemma}[theorem]{Lemma}
\newtheorem{proposition}[theorem]{Proposition}
\newtheorem{corollary}[theorem]{Corollary}
\newcounter{intro}
\newtheorem{introthm}[intro]{Theorem}
\theoremstyle{definition}
\newtheorem{definition}[theorem]{Definition}
\newtheorem{example}[theorem]{Example}
\newtheorem{remark}[theorem]{Remark}
\newtheorem{chunk}[theorem]{}
\newtheorem*{ack}{Acknowledgements}
\theoremstyle{definition}
\newcommand{\Spec}{{\operatorname{Spec}}}
\newcommand{\Ext}{{\operatorname{Ext}}}
\newcommand{\Tor}{{\operatorname{Tor}}}
\renewcommand{\H}{\operatorname{H}}
\DeclareMathOperator{\pdim}{projdim}
\newcommand{\x}{\bm{x}}
\newcommand{\Supp}{{\operatorname{Supp}}}
\newcommand{\ann}{{\operatorname{ann}}}
\newcommand{\m}{\mathfrak{m}}
\newcommand{\p}{\mathfrak{p}}
\title[On liftings of modules of finite projective dimension]{On liftings of modules of finite projective dimension}
\author[N.~KC]{Nawaj KC}
\address{Department of Mathematics,
University of Nebraska, Lincoln, NE 68588, U.S.A.}
\email{nkc3@huskers.unl.edu}
\author[A.~Soto Levins]{Andrew J. Soto Levins}
\address{Department of Mathematics and Statistics,
Texas Tech University, Lubbock, TX 79409 U.S.A.}
\email{ansotole@ttu.edu}
\subjclass[2020]{13D22 (primary), 13C14, 13D40, 13H05, 13H15.}
\keywords{Hilbert-Samuel multiplicity, intersection multiplicity, Dutta multiplicity, modules of finite projective dimension, Lech's conjecture, Length conjecture, liftability of modules.}
\begin{document}

\begin{abstract}
    We introduce and study a notion of Serre liftable modules; these are modules that are liftable to modules of the maximal possible dimension over a regular local ring. We establish new cases of Serre's positivity conjecture over ramified regular local rings by proving it for Serre liftable modules. Furthermore, we show that the length of a nonzero Serre liftable module is bounded below by the Hilbert-Samuel multiplicity of the local ring. This establishes special cases of the Length Conjecture of Iyengar-Ma-Walker. 
\end{abstract}

\maketitle

\section*{Introduction.}\label{s_intro}

Suppose $R$ is a noetherian local ring and $Q$ is a regular local ring that surjects onto $R$. We are interested in lifting modules along this surjection. The classical notion of such liftings is as follows: a $Q$-module $L$ is a \textit{lift} of an $R$-module $M$ if $L \otimes_Q R \cong M$ as modules and $\Tor^Q_i(L, R) = 0$ for $i > 0$. In this case, the $R$-module $M$ is said to be a \textit{liftable} to $Q$; see \cite{ Buchsbaum/Eisenbud:1972, Dao:2007, Hochster:1975, Jorgensen:1999, Jorgensen:2003, Peskine/Szpiro:1973}. In this paper, we introduce and study a weaker notion of lifting $R$-modules to $Q$. 

Consider a finitely generated $Q$-module $L$ and let $M$ be the corresponding $R$-module $L \otimes_Q R$. By the dimension inequality of Serre \cite[Chapter V, Theorem 3]{Serre}, we have \[ \dim{Q}-\dim{L} \geq \dim{R}-\dim{M},\]and we say $L$ is a \textit{Serre lift} of $M$ if the inequality above is an equality. If $M$ admits such a lift, then $M$ is said to be \textit{Serre liftable} to $Q$. 

We prove that liftable modules are Serre liftable; see \cref{p_LiftableDimlyLiftable}. But there do exist unliftable modules that admit Serre lifts. Our main input in this paper is to deduce results about lengths and multiplicities of Serre liftable modules by using the intersection multiplicity pairing introduced by Serre; see \cref{intersectionmultiplicity}.  

Suppose $R$ is a regular local ring. Assume $M$ and $N$ are finitely generated $R$-modules such that $\ell_R(M\otimes_R N) < \infty$ and $\dim M + \dim N = \dim R$. In this case, Serre conjectured that the intersection multiplicity, $\chi^R(M, N)$, is strictly positive \cite[Chapter V, 4]{Serre}. Assume $R = Q/(f)$ where $Q$ is an unramified regular local ring and $f \in Q$. It is known that if $M$ is liftable to $Q$, then $\chi^R(M, N) > 0$ \cite{Buchsbaum/Eisenbud:1972}. One of our main results is to show that it suffices to assume $M$ is Serre liftable. 

We also establish uniform lower bounds on lengths of Serre liftable modules. Iyengar, Ma, and Walker have conjectured that if $M$ is a nonzero module of finite length and finite projective dimension over a local ring $R$, then $\ell_R(M) \geq e(R)$, where $e(R)$ is the Hilbert-Samuel multiplicity of $R$  \cite[Conjecture 1]{Iyengar/Ma/Walker:2022}. By Ma's thesis work \cite[Chapter V]{Ma:2014}, this conjecture is a generalization of the long-standing Lech's conjecture \cite{Lech:1960} for Cohen-Macaulay rings.  We prove that if $M$ is Serre liftable to an unramified regular local ring $Q$, then $\ell_R(M) \geq e(R)$. This result is new even for liftable modules. These two results are subsumed in the following theorem. 

\begin{introthm}
    \label{maintheorem} Let $R$ be a local ring and $M\neq 0$ an $R$-module that has a Serre lift to an unramifed regular local ring $(Q, \m)$. For a finitely generated $R$-module $N$ with $\ell(M\otimes_{R}N)<\infty$ and $\dim{M}+\dim{N}=\dim{R}$, one has
\[\ell(M\otimes_{R}N)\geq e(N).\]
When moreover $R = Q/(f)$ for $f \in \m\backslash\m^2$, $\chi^{R}(M,N)\geq e(N)$ also holds.
\end{introthm}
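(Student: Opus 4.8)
The plan is to fix a Serre lift $L$ of $M$ to $Q$ and transport the whole problem to the regular ring $Q$, where Serre's intersection pairing is available. Writing $R = Q/I$, since $N$ is an $R$-module one has $L \otimes_Q N \cong (L\otimes_Q R)\otimes_R N = M\otimes_R N$, and the Serre lift condition $\dim Q - \dim L = \dim R - \dim M$ combined with $\dim M + \dim N = \dim R$ gives $\dim_Q L + \dim_Q N = \dim Q$. Thus $L$ and $N$ have complementary dimension over $Q$ and $\ell(L\otimes_Q N) = \ell(M\otimes_R N) < \infty$, so $\chi^Q(L,N)$ is defined. Moreover $e(N)$, computed over $R$ with respect to $\mathfrak m_R = \mathfrak m_Q R$, agrees with the Hilbert--Samuel multiplicity $e(\mathfrak m_Q; N)$ of $N$ as a $Q$-module. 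With these identifications, both assertions will follow from the single inequality
\[\chi^Q(L,N) \ge e(N).\]

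For the length statement I would compare $\ell(M\otimes_R N)=\ell(\Tor_0^Q(L,N))$ with $\chi^Q(L,N)=\sum_i (-1)^i\ell(\Tor_i^Q(L,N))$. Over the regular ring $Q$ the partial Euler characteristic $\chi_1 = \sum_{i\ge 1}(-1)^{i-1}\ell(\Tor_i^Q(L,N))$ is nonnegative, so $\chi^Q(L,N)=\ell(\Tor_0^Q(L,N))-\chi_1 \le \ell(M\otimes_R N)$. Combined with the central inequality this yields $\ell(M\otimes_R N)\ge \chi^Q(L,N)\ge e(N)$.

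For the statement about $\chi^R$, now $R=Q/(f)$ with $f\in\mathfrak m\setminus\mathfrak m^2$ is itself regular, so $\chi^R(M,N)$ is defined. I would apply $L\otimes_Q^{\mathsf L}-$ to $0\to Q\xrightarrow{f}Q\to R\to 0$: since $f$ is a nonzerodivisor, $L\otimes_Q^{\mathsf L}R$ has only the homology $M=L/fL$ in degree $0$ and $M'=(0:_L f)$ in degree $1$. The change-of-rings spectral sequence for $L\otimes_Q^{\mathsf L}N\simeq (L\otimes_Q^{\mathsf L}R)\otimes_R^{\mathsf L}N$, taken in Euler characteristics, gives $\chi^Q(L,N)=\chi^R(M,N)-\chi^R(M',N)$. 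Because $M=L/fL$ has dimension one less than $L$, the element $f$ avoids every top-dimensional (hence minimal) associated prime of $L$, so $\dim M'<\dim L$, i.e. $\dim M'\le \dim M$; hence $\dim M'+\dim N\le \dim R$ and $\chi^R(M',N)\ge 0$ by nonnegativity of Serre's pairing (valid even for ramified $R$). Therefore $\chi^R(M,N)\ge \chi^Q(L,N)\ge e(N)$.

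The heart of the matter, and the step I expect to be the main obstacle, is the central inequality $\chi^Q(L,N)\ge e(N)$. Here I would use biadditivity of $\chi^Q$ together with the fact that it vanishes on modules of too small dimension: filtering $N$ and $L$ by prime cyclic quotients reduces $\chi^Q(L,N)$ to a nonnegative combination $\sum_{\mathfrak p,\mathfrak q}\ell_{Q_{\mathfrak p}}(L_{\mathfrak p})\,\ell_{Q_{\mathfrak q}}(N_{\mathfrak q})\,\chi^Q(Q/\mathfrak p,Q/\mathfrak q)$ over primes of complementary dimension meeting only at $\mathfrak m$, while the associativity formula expresses $e(N)=\sum_{\mathfrak q}\ell_{Q_{\mathfrak q}}(N_{\mathfrak q})\,e(Q/\mathfrak q)$. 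The claim then follows from the prime-to-prime bound $\chi^Q(Q/\mathfrak p,Q/\mathfrak q)\ge e(Q/\mathfrak p)\,e(Q/\mathfrak q)$, which, since $e(Q/\mathfrak p)\ge 1$, gives $\chi^Q(L,N)\ge e(\mathfrak m_Q;L)\,e(\mathfrak m_Q;N)\ge e(N)$. Establishing this bound---that Serre's intersection multiplicity dominates the product of the Hilbert--Samuel multiplicities of the two cycles---is the crux: in equicharacteristic it is the classical statement that a proper intersection multiplicity is at least the product of the local multiplicities, provable by passing to tangent cones in the associated graded ring, and it is exactly here that the unramified hypothesis on $Q$, which secures the requisite positivity of the intersection pairing (via the Dutta multiplicity if needed), is used.
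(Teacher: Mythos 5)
Your proposal is correct and takes essentially the same approach as the paper: transport the problem to $Q$ via the Serre lift, use nonnegativity of $\chi_1$ (Lichtenbaum) for the length bound, use the two-term Tor decomposition of $L\otimes^{\mathsf L}_Q R$ together with Gabber's nonnegativity over the ramified regular ring $R$ for the $\chi^R$ bound, with the inequality $\chi^Q(L,N)\geq e(L)e(N)\geq e(N)$ as the crux. The paper disposes of that crux by directly citing the theorem of Serre (equal characteristic) and Skalit (unramified mixed characteristic), which is precisely the classical bound you arrive at after your prime-filtration reduction.
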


In the final section, we give an example of an unliftable module of finite length and finite projective dimension over a non-regular ring that admits a Serre lift; see \cref{mainExample}. We also give an example of a module of finite projective dimension over a non-regular ring that is not Serre liftable; see \cref{unliftableModule}. We do not know of any example of a module over a ramified regular ring that does not admit a Serre lift to an unramified regular ring; see \cref{mainquestion}.

\begin{ack}
    It is a pleasure to thank Mark Walker and Jack Jeffries for providing innumerable ideas and insights and their comments on preliminary drafts. We are thankful to Linquan Ma for several discussions, helping us clarify many of our results, and encouraging us to consider some key generalizations of our results. We thank  him and Mark Walker for pointing out the argument in \cref{unliftableModule}. We thank Josh Pollitz for his feedback on preliminary drafts, many fruitful discussions, and also the ideas behind \cref{obstructionforCI}, and Ryan Watson for many enjoyable discussions. We thank the referee for their extensive feedback.

    The first author was supported through NSF grants DMS-2200732 and DMS-2044833. This material is also based upon work supported by the National Science Foundation under Grant No. DMS-1928930 and by the Alfred P. Sloan Foundation under grant G-2021-16778, while the authors were in residence at the Simons Laufer Mathematical Sciences Institute (formerly MSRI) in Berkeley, California, during the Spring 2024 semester.
\end{ack}

\section{Serre liftable modules.}
\noindent Throughout this paper, all rings are noetherian local rings. We will say a local ring $(R, \m)$ is unramified if it is either of equal characteristic or it is of mixed characteristic $(0,p)$ for a prime integer $p$ with $p \in \m \setminus \m^2$. For basic results on Hilbert-Samuel multiplicities, we refer to \cite{Bruns/Herzog:1998, Serre}; alternatively, see \cite[Section 2]{Iyengar/Ma/Walker:2022} as we use similar notation. 

\begin{chunk} \label{intersectionmultiplicity}
    Let $R$ be a ring and let $M$ and $N$ be complexes such that ${\ell_R(\Tor_i^{R}(M,N)) < \infty}$ for all $i$. Assuming $\pdim_R(M)<\infty$, the \textit{intersection multiplicity} pairing is:\[ \chi^R(M, N) :=  \sum_{i \geq 0} (-1)^i \ell_R(\Tor_i^{R}(M,N)).\]When $R$ is an unramified regular local ring, and $M$ and $N$ are $R$-modules, Serre \cite[Chapter V, Theorem 1]{Serre} proved the following properties of this pairing: \label{serretheorem}
    \begin{enumerate}[(i)] 
 \item \textit{Non-negativity}: $\chi^{R}(M, N) \geq 0$, \item \textit{Vanishing:} If $\dim M + \dim N < \dim R$, then $\chi^{R}(M, N) = 0$, \item  \textit{Positivity:} If $\dim M + \dim N = \dim R$, then $\chi^{R}(M,N) > 0$.
\end{enumerate}
    Positivity remains an open conjecture for ramified regular local rings. Non-negativity (due to Gabber \cite{RobertsGabber:1998}) and the vanishing property (due to Gillet-Soul\'e \cite{Gillet/Soule:1987} and Roberts \cite{RobertsVanishing:1985} independently) have been proven for all regular rings. We utilize the following theorem of Serre (in equal characteristic) and Skalit (in the unramified mixed characteristic).

\begin{theorem} \cite[Theorem A]{Skalit:2015}, \cite[Ch.V, Thm 1 Complement]{Serre} \label{serreskalittheorem}
Suppose $R$ is an unramified regular local ring and $M$, $N$ are finitely generated $R$-modules such that $\ell_R(M \otimes_R N) < \infty$. If $ \chi^{R}(M,N) > 0 $, then $ \chi^{R}(M, N) \geq e(M) e(N). $   
\end{theorem}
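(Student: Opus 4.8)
The plan is to use the properties quoted above to reduce to the only numerically nontrivial situation, then to realize $\chi^{R}(M,N)$ as a Hilbert--Samuel multiplicity through Serre's reduction to the diagonal, after which the product bound $e(M)e(N)$ drops out of a comparison of multiplicities. Since $\ell_{R}(M\otimes_{R}N)<\infty$ forces $\dim M+\dim N\le\dim R$, and since the vanishing statement recalled above makes $\chi^{R}(M,N)=0$ whenever $\dim M+\dim N<\dim R$, the hypothesis $\chi^{R}(M,N)>0$ already places us in the equality case $\dim M+\dim N=\dim R=:d$, which is the only case I must treat. I would also pass to the completion $\hat R$, which changes neither $\chi^{R}(M,N)$ nor the lengths involved nor $e(M)$ and $e(N)$, and preserves unramifiedness; any residue-field subtleties are absorbed by a harmless faithfully flat base change.

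Suppose first that $R$ is equicharacteristic, so that $\hat R\cong k[[x_{1},\dots,x_{d}]]$ admits a coefficient field $k$. Set $A:=\hat R\,\widehat{\otimes}_{k}\hat R$, a regular local ring of dimension $2d$ with maximal ideal $\mathfrak n$, where $\widehat{\otimes}_{k}$ denotes the completed tensor product, and let $\underline y=(x_{1}\otimes 1-1\otimes x_{1},\dots,x_{d}\otimes 1-1\otimes x_{d})$, a regular sequence generating the kernel of the diagonal surjection $A\twoheadrightarrow R$. Serre's reduction to the diagonal identifies $\Tor^{R}_{i}(M,N)$ with $\Tor^{A}_{i}(A/(\underline y),\,M\widehat{\otimes}_{k}N)$, and since $A/(\underline y)$ is resolved over $A$ by the Koszul complex on $\underline y$, taking alternating sums of lengths gives
\[
\chi^{R}(M,N)=\sum_{i}(-1)^{i}\ell\bigl(\H_{i}(K(\underline{y};\,M\widehat{\otimes}_{k}N))\bigr).
\]
Now $P:=M\widehat{\otimes}_{k}N$ is an $A$-module with $\dim_{A}P=\dim M+\dim N=d$, and $\underline y$ is a system of parameters for it because $P/(\underline y)P\cong M\otimes_{R}N$ has finite length. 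By the classical identification (Serre, Auslander--Buchsbaum) of the Koszul Euler characteristic of a system of parameters with the corresponding Hilbert--Samuel multiplicity, the displayed sum equals $e((\underline y);P)$.

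It remains to prove $e((\underline y);P)\ge e(M)e(N)$, and here two standard facts about multiplicities do the work. First, because $(\underline y)\subseteq\mathfrak n$ one has $(\underline y)^{n}\subseteq\mathfrak n^{n}$, hence $\ell(P/(\underline y)^{n}P)\ge\ell(P/\mathfrak n^{n}P)$ for all $n$; as both Hilbert functions are eventually polynomial of degree $\dim P=d$, comparing leading coefficients yields $e((\underline y);P)\ge e(\mathfrak n;P)$. Second, passing to associated graded modules gives a graded isomorphism $\mathrm{gr}_{\mathfrak n}P\cong\mathrm{gr}_{\m}M\otimes_{k}\mathrm{gr}_{\m}N$, so the Hilbert function of $P$ is the convolution of those of $M$ and $N$; a short Beta-integral estimate of $\sum_{i+j=n}i^{\dim M-1}j^{\dim N-1}$ then shows that multiplicities of tensor products over a field multiply, i.e.\ $e(\mathfrak n;P)=e(M)e(N)$. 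Combining the three relations gives $\chi^{R}(M,N)=e((\underline y);P)\ge e(\mathfrak n;P)=e(M)e(N)$, settling the equicharacteristic case.

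The main obstacle is the unramified mixed-characteristic case, which is exactly Skalit's contribution and does not follow from the argument above, since there is no coefficient field and $\hat R\,\widehat{\otimes}_{k}\hat R$ is unavailable. Here I would instead write $\hat R\cong V[[x_{1},\dots,x_{d-1}]]$ for a Cohen ring $V$ with uniformizer $p$ and form $A:=\hat R\,\widehat{\otimes}_{V}\hat R$; the diagonal kernel is now generated by only $d-1$ elements, since $p\otimes 1-1\otimes p=0$, and $\dim(M\widehat{\otimes}_{V}N)$ drops correspondingly, so the same Koszul-to-multiplicity mechanism applies in principle. The delicate point, and the genuine content of Skalit's theorem, is that tensoring over the one-dimensional base $V$ rather than a field obstructs the clean product formula $e(\mathfrak n;P)=e(M)e(N)$; controlling the contribution of the non-unit $p$ is the crux, and I would expect to invoke Skalit's analysis (or a deformation reducing to the equicharacteristic case) precisely at this step.
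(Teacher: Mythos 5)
This statement is not proved in the paper at all: it is imported from the literature, with the equal characteristic case credited to the Complement of \cite[Ch.~V, Thm.~1]{Serre} and the unramified mixed characteristic case to \cite[Theorem A]{Skalit:2015}. So your proposal can only be measured against those sources, and on that score it does well on the first and simply defers on the second. Your equicharacteristic argument is correct and is in substance Serre's own proof of the Complement: vanishing together with the hypothesis $\chi^{R}(M,N)>0$ forces $\dim M+\dim N=\dim R$; reduction to the diagonal converts $\chi^{R}(M,N)$ into the Koszul Euler characteristic of $\underline{y}$ on $P=M\widehat{\otimes}_{k}N$, hence into $e((\underline{y});P)$ by the Koszul--multiplicity theorem (the same \cite[Chapter IV, Theorem 1]{Serre} that the paper uses in the proof of \cref{p_LiftableDimlyLiftable}); and the two comparisons $e((\underline{y});P)\geq e(\mathfrak{n};P)$ (from $(\underline{y})\subseteq\mathfrak{n}$) and $e(\mathfrak{n};P)=e(M)e(N)$ (from $\mathrm{gr}_{\mathfrak{n}}P\cong\mathrm{gr}_{\m}M\otimes_{k}\mathrm{gr}_{\m}N$ and convolution of Hilbert functions) are standard and correctly deployed.

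Two caveats about your mixed characteristic paragraph. First, you locate the difficulty in the failure of the product formula over $V$, but the more basic failure is that reduction to the diagonal itself breaks: $M\widehat{\otimes}_{V}N$ no longer computes the derived tensor product over $V$, since $\Tor_{1}^{V}(M,N)$ (completed) can be nonzero --- for instance when $p$ kills both modules --- so the identity $\chi^{R}(M,N)=\chi(K(\underline{y};M\widehat{\otimes}_{V}N))$ acquires a correction term $-\chi(K(\underline{y};\Tor_{1}^{V}(M,N)))$, and controlling that negative contribution against $e(M)e(N)$ is the actual content of Skalit's theorem, not merely a leading-coefficient computation. Second, since you invoke Skalit exactly at that point, your proposal proves the theorem only in equal characteristic and cites it in mixed characteristic; this is legitimate, and it mirrors precisely how the paper treats the statement, but it means the proposal should be read as a reconstruction of the Serre half plus a citation of the Skalit half rather than as a self-contained proof.
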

\end{chunk}

\begin{chunk}
    Suppose $Q$ is a regular local ring. The dimension inequality of Serre \cite[Ch.V, Theorem 3]{Serre} establishes that for any two finitely generated modules $M$ and $N$ over $Q$, we have \[ \dim M + \dim N \leq \dim Q + \dim(M\otimes_Q N).\]
    \begin{definition}
        Suppose $Q$ surjects onto a local ring $R$, let $L$ be a finitely generated $Q$-module, and set $M = L\otimes_Q R$. We say $L$ is a \textit{Serre lift} of the $R$-module $M$ if \[ \dim{Q}-\dim{L} = \dim{R}-\dim{M}. \]If an $R$-module $M$ admits a Serre lift $L$, then $M$ is said to be \textit{Serre liftable} to $Q$.  
    \end{definition}

   If instead $L$ is a $Q$-module such that $L \otimes^{\mathsf L}_Q R \simeq M$, we say $L$ is a \textit{lift} of $M$. That is, $L \otimes_Q R \cong M$ as modules and $\Tor^Q_i(L, R) = 0$ for $i > 0$. We now establish liftable modules are Serre liftable. The following lemma is a consequence of the fact that $[N]=\Sigma(-1)^{i}[H_{i}(N)]$ in the Grothendieck group of complexes \cite[Corollary 6.6.1]{MR3076731}.

\begin{lemma} \label{l_key}
    Suppose $R$ is a local ring, $M$ is an $R$-module of finite projective dimension and $N$ is an $R$-complex with bounded homology. If $\ell(\Tor_{i}^{R}(M,N)) < \infty$ for all $i$, \[ \chi^{R}(M,N) = \sum_{i} (-1)^i\chi^{R}(M,\H_i(N)). \]
\end{lemma}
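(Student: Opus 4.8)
The plan is to view $\chi^R(M,-)$ as an additive invariant on bounded complexes with finite-length hyper-Tor against $M$, and then feed it the Grothendieck-group identity $[N]=\sum_i(-1)^i[\H_i(N)]$ cited above. Throughout I write $\chi^R(M,N)=\sum_i(-1)^i\ell(\Tor_i^R(M,N))$, where $\Tor_i^R(M,N)=\H_i(M\lotimes_R N)$; since $\pdim_R M<\infty$ and $N$ has bounded homology, the complex $M\lotimes_R N$ is bounded, and by hypothesis its homology has finite length, so this alternating sum is a finite, well-defined integer.

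First I would check that the right-hand side even makes sense, i.e. that $\ell(\Tor_j^R(M,\H_i(N)))<\infty$ for all $i,j$. This follows from the support identity $\Supp(M\lotimes_R N)=\Supp M\cap\Supp N$: the finite-length hypothesis forces $\Supp M\cap\Supp N\subseteq\{\m\}$, and since $\Supp N=\bigcup_i\Supp\H_i(N)$ we get $\Supp M\cap\Supp\H_i(N)\subseteq\{\m\}$ for each $i$, whence $M\lotimes_R\H_i(N)$ has finite-length homology. Consequently each $\chi^R(M,\H_i(N))$ is defined, and the sum on the right is finite because $\H_i(N)=0$ for all but finitely many $i$.

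Next I would establish additivity: a short exact sequence of complexes $0\to N'\to N\to N''\to 0$ (equivalently, a distinguished triangle) induces a long exact sequence on $\Tor_*^R(M,-)$, and the alternating sum of lengths along a bounded long exact sequence of finite-length modules vanishes, giving $\chi^R(M,N)=\chi^R(M,N')+\chi^R(M,N'')$ whenever all three terms have finite-length Tor. Thus $N\mapsto\chi^R(M,N)$ factors through the Grothendieck group of bounded, finite-homology complexes; note also $\chi^R(M,\Sigma^i K)=(-1)^i\chi^R(M,K)$, since shifting $K$ by $i$ merely reindexes the Tor modules.

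Finally I would apply this homomorphism to the telescoping identity $[N]=\sum_i(-1)^i[\H_i(N)]$ of \cite[Corollary 6.6.1]{MR3076731}, realized concretely by the canonical-truncation triangles $\tau_{\le i-1}N\to\tau_{\le i}N\to\Sigma^i\H_i(N)$; each places $\H_i(N)$ in degree $0$ with the sign $(-1)^i$ absorbing the shift via $\chi^R(M,\Sigma^i\H_i(N))=(-1)^i\chi^R(M,\H_i(N))$. Evaluating yields $\chi^R(M,N)=\sum_i(-1)^i\chi^R(M,\H_i(N))$, as claimed. The main point to be careful about is bookkeeping: verifying that the finite-length condition propagates to every $\H_i(N)$ so that additivity may legitimately be applied term by term, and matching the shift sign $(-1)^i$ in the Grothendieck-group identity against the reindexing of Tor. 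Neither is deep, so I expect no serious obstacle beyond this careful accounting.
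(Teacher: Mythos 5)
Your proof is correct and takes essentially the same route as the paper: the paper offers no detailed argument at all, simply asserting that the lemma is a consequence of the Grothendieck-group identity $[N]=\sum_i(-1)^i[\H_i(N)]$ from the very reference you cite. Your write-up (additivity of $\chi^R(M,-)$ over triangles, the truncation triangles realizing that identity with the shift sign, and the support argument propagating the finite-length hypothesis) is precisely the careful expansion of that one-line justification.
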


\begin{proposition} \label{p_LiftableDimlyLiftable}
    If a finitely generated $Q$-module $L$ is a lift of a nonzero $R$-module $M$, then $L$ is a Serre lift of $M$. 
\end{proposition}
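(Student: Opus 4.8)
The plan is to prove the dimension equality by establishing the inequality opposite to Serre's dimension inequality. Write $I=\ker(Q\twoheadrightarrow R)$, so $R=Q/I$ and $M=L/IL$. Since the regular local ring $Q$ is Cohen--Macaulay, $\dim R=\dim Q-\height I$, and hence the desired identity $\dim Q-\dim L=\dim R-\dim M$ is equivalent to $\dim M=\dim L-\height I$. Applying Serre's dimension inequality to the pair $L,R$ gives $\dim L+\dim R\le \dim Q+\dim(L\otimes_Q R)$, that is, $\dim M\ge \dim L-\height I$; so the whole problem reduces to the reverse bound $\dim M\le \dim L-\height I$. This is exactly where the lifting hypothesis $\Tor^Q_i(L,R)=0$ for $i>0$ must enter: without it the support $\supp_Q M=\supp_Q L\cap \V(I)$ could have excess dimension.

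To extract this upper bound I would localize at a prime of maximal dimension in the support of $M$. Choose $\mathfrak q\in\supp_Q M$ with $\dim Q/\mathfrak q=\dim M$; such a $\mathfrak q$ is automatically minimal in $\supp_Q M$, so $M_{\mathfrak q}$ has finite length over the regular local ring $Q_{\mathfrak q}$. Because $\Tor$ commutes with localization, $\Tor^{Q_{\mathfrak q}}_i(L_{\mathfrak q},R_{\mathfrak q})=0$ for $i>0$ while $L_{\mathfrak q}$ retains finite projective dimension, so $\chi^{Q_{\mathfrak q}}(L_{\mathfrak q},R_{\mathfrak q})=\ell(M_{\mathfrak q})>0$. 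The key input is then the Vanishing property of the intersection multiplicity, known unconditionally over every regular local ring: its contrapositive forces $\dim L_{\mathfrak q}+\dim R_{\mathfrak q}\ge \dim Q_{\mathfrak q}$.

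It then remains to convert this local inequality into the global bound using the dimension arithmetic of the catenary domain $Q$. Pick a minimal prime $\mathfrak p\in\supp_Q L$ with $\mathfrak p\subseteq\mathfrak q$ computing $\dim L_{\mathfrak q}=\height(\mathfrak q/\mathfrak p)=\height\mathfrak q-\height\mathfrak p$, and note $\dim R_{\mathfrak q}\le \height\mathfrak q-\height I$ since $\height_{Q_{\mathfrak q}}IQ_{\mathfrak q}\ge \height_Q I$. The local inequality becomes $\height\mathfrak q\le(\height\mathfrak q-\height\mathfrak p)+(\height\mathfrak q-\height I)$, i.e. $\height\mathfrak q\ge \height\mathfrak p+\height I$. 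Feeding this into $\dim M=\dim Q-\height\mathfrak q$ yields $\dim M\le \dim Q/\mathfrak p-\height I\le \dim L-\height I$, which is the missing reverse bound. Combined with Serre's inequality this gives $\dim M=\dim L-\height I$, hence $\dim Q-\dim L=\dim R-\dim M$, so $L$ is a Serre lift of $M$.

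The main obstacle is precisely this reverse inequality $\dim M\le \dim L-\height I$: Serre's dimension inequality points the wrong way, and geometrically the intersection $\supp_Q L\cap \V(I)$ can carry excess dimension, so one genuinely needs Tor-independence to exclude that. The device that makes it work is localizing at a top-dimensional prime of $\supp_Q M$ to reach a finite-length situation, where the Vanishing theorem turns the Tor-vanishing into a sharp dimension estimate; the remaining height bookkeeping in the catenary Cohen--Macaulay ring $Q$ is routine. (Note that $M\neq 0$ forces $L\neq 0$, so all the supports involved are nonempty.)
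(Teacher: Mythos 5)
Your proof is correct, and it takes a genuinely different route from the paper's. Both arguments turn on the same key input---the vanishing theorem of Gillet--Soul\'e and Roberts, valid over every regular local ring---but they reach the finite-length situation where that theorem applies in different ways. You localize at a top-dimensional prime $\mathfrak{q}$ of $\Supp_Q M$; there, Tor-independence makes the intersection multiplicity an honest length, $\chi^{Q_{\mathfrak{q}}}(L_{\mathfrak{q}},R_{\mathfrak{q}})=\ell(M_{\mathfrak{q}})>0$, so vanishing (in contrapositive form) yields $\dim L_{\mathfrak{q}}+\dim R_{\mathfrak{q}}\geq\dim Q_{\mathfrak{q}}$, which you then convert into the global bound $\dim M\leq \dim L-\height I$ by height arithmetic in the catenary Cohen--Macaulay ring $Q$. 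The paper instead stays at the closed point: it picks a system of parameters $\x$ for $M$ (and $R$), uses the quasi-isomorphism $L\lotimes_Q K(\x)\simeq M\lotimes_R K(\x)$ together with \cref{l_key} to show that a strict inequality $\dim Q-\dim L>\dim R-\dim M$ would force $\chi^Q(L,K(\x))=0$ by vanishing, contradicting Serre's theorem that $\chi^Q(L,K(\x))=\chi^R(M,K(\x))=e(\x;M)>0$. What your route buys is economy of machinery: no Koszul complexes, no Euler characteristics of complexes, no \cref{l_key}, and no appeal to the multiplicity interpretation of Koszul homology---only localization, the vanishing theorem, and standard dimension theory; the cost is the prime/height bookkeeping, where you correctly use that $\mathfrak{q}\supseteq I$ (since $M=L/IL$) and that $Q$ Cohen--Macaulay gives $\height\mathfrak{q}=\height\mathfrak{p}+\height(\mathfrak{q}/\mathfrak{p})$ and $\dim Q/\mathfrak{p}+\height\mathfrak{p}=\dim Q$. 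The paper's route avoids all such bookkeeping and never localizes, at the price of the derived-category lemma and Serre's theorem on Koszul multiplicities.
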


\begin{proof}

    By the dimension inequality of Serre, we have \[ \dim{Q}-\dim{L} \geq \dim{R}-\dim{M}. \]Suppose $\dim M = r$, and choose $\x = x_1, \ldots, x_r$ in the maximal ideal of $R$ which is a parameter sequence on both $M$ and $R$.  Let $K_R = K(\x)$ be the Koszul complex on $\x$ over $R$. As $L$ is a lift of $M$, we have the following quasi-isomorphism of complexes \[ L \otimes^{\mathsf L}_Q K_R \simeq M \otimes^{\mathsf L}_R K_R.\]  As $\H_i(K_R)$ is annihilated by $(\x)$ as an $R$-module, $\Supp(\H_i(K_R)) \subseteq \Supp(R/(\x))$, whence $\dim \H_i(K_R) \leq \dim R/(\x)$ and $L \otimes_Q \H_i(K_R)$ has finite length for all $i$. By \cref{l_key}, we have \[ \chi^{Q}(L,K_R) = \sum_{i \geq 0} (-1)^i \chi^{Q}(L,\H_i(K_R)). \]For contradiction, assume \[ \dim{Q}-\dim{L} > \dim{R}-\dim{M}, \]i.e. $\dim L + \dim R/(\x) < \dim Q$, whence $\dim L + \dim \H_i(K_R) < \dim Q$ for all $i$. The vanishing theorem of Roberts and Gillet-Soul\'e (see \cref{intersectionmultiplicity}) implies $\chi^{Q}(L,K_R) = 0$. This is a contradiction as \[\chi^{Q}(L,K_R) = \chi^{R}( M,K_R) = e(\x;M) > 0\]by Serre's theorem \cite[Chapter IV, Theorem 1]{Serre}. This completes our proof.
\end{proof}

\begin{example}
    Hochster  \cite[Example 2]{Hochster:1975} gives an example of a module over a ramified regular local ring that does not lift to an unramified regular local ring $Q$. It is easy to show that this example admits a Serre lift. 
    
    Also observe that $R/(\x)$ for $\x$ any system of parameters on a local ring $R$ is an $R$-module that admits a Serre lift. If $R$ is not Cohen-Macaulay, such modules are unliftable as $\pdim_R (R/(\x)) = \infty$ by the New Intersection theorem; see \cite{Roberts:1987} or \cite[Proposition 6.2.4]{Roberts:1998}. 
\end{example}

\end{chunk}

\section{Main Results.}

\begin{theorem} \label{maintheorem} Let $R$ be a local ring and $M\neq 0$ an $R$-module that has a Serre lift to an unramifed regular local ring $(Q, \m)$. For a finitely generated $R$-module $N$ with $\ell(M\otimes_{R}N)<\infty$ and $\dim{M}+\dim{N}=\dim{R}$, one has
\[\ell(M\otimes_{R}N)\geq e(N).\]
When moreover $R = Q/(f)$ for $f \in \m\backslash\m^2$, $\chi^{R}(M,N)\geq e(N)$ also holds.
\end{theorem}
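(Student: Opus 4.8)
The plan is to transfer the problem from $R$ to the unramified regular ring $Q$, where Serre's positivity and the Serre--Skalit bound (\cref{serreskalittheorem}) are available. Write $L$ for the Serre lift, so that $M \cong L \otimes_Q R$ and $\dim Q - \dim L = \dim R - \dim M$. Since $N$ is an $R$-module there is a natural isomorphism $M \otimes_R N \cong L \otimes_Q N$, whence $\ell(M \otimes_R N) = \ell(L \otimes_Q N)$; in particular $L \otimes_Q N$ has finite length, and as $\Supp_Q \Tor_i^Q(L,N) \subseteq \Supp_Q(L \otimes_Q N)$ consists of the maximal ideal alone, every $\Tor_i^Q(L,N)$ has finite length. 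First I would record the dimension count $\dim_Q L + \dim_Q N = \dim Q$: this follows from the Serre-lift equality together with $\dim M + \dim N = \dim R$ and the fact that $\dim_Q N = \dim_R N$ (since $N$ is killed by $\ker(Q \to R)$).

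With $\dim L + \dim N = \dim Q$ in hand, Serre's positivity over the unramified ring $Q$ gives $\chi^{Q}(L,N) > 0$, so \cref{serreskalittheorem} yields $\chi^{Q}(L,N) \geq e_Q(L)\,e_Q(N)$. Because $L \neq 0$ the associativity formula for multiplicities gives $e_Q(L) \geq 1$, and because $N$ is an $R$-module the $\m$-adic filtration of $N$ over $Q$ agrees with its $\m R$-adic filtration over $R$, so $e_Q(N) = e(N)$; hence $\chi^{Q}(L,N) \geq e(N)$. To pass from this Euler characteristic to a length I would invoke the non-negativity of the first partial Euler characteristic over the regular ring $Q$ \cite{Serre}, namely $\ell(L\otimes_Q N) - \chi^{Q}(L,N) = \sum_{i\geq 1}(-1)^{i-1}\ell(\Tor_i^{Q}(L,N)) \geq 0$. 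Combining, $\ell(M\otimes_R N) = \ell(L\otimes_Q N) \geq \chi^{Q}(L,N) \geq e(N)$, which is the first assertion.

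For the second assertion, assume $R = Q/(f)$ with $f \in \m\setminus\m^2$; then $f$ is a nonzerodivisor on the domain $Q$, and $R$ is regular, so every $R$-module has finite projective dimension and $\chi^{R}$ is defined. The free resolution $0 \to Q \xrightarrow{f} Q \to R \to 0$ shows that $C := L\lotimes_Q R$ is represented by $L \xrightarrow{f} L$, with $\H_0(C) = M$ and $\H_1(C) = (0 :_L f) =: M'$. Applying \cref{l_key} to the finite-projective-dimension module $N$ and the complex $C$ (all relevant $\Tor$ having finite length, since $N\lotimes_R C \simeq L\lotimes_Q N$), together with the symmetry of $\chi^{R}$, I would obtain
\[
\chi^{Q}(L,N) = \chi^{R}(N,C) = \chi^{R}(N,M) - \chi^{R}(N,M') = \chi^{R}(M,N) - \chi^{R}(M',N),
\]
so that $\chi^{R}(M,N) = \chi^{Q}(L,N) + \chi^{R}(M',N)$. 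As $R$ is regular and $\Supp M' \cap \Supp N$ consists of the maximal ideal alone, Gabber's non-negativity \cite{RobertsGabber:1998} gives $\chi^{R}(M',N) \geq 0$, and therefore $\chi^{R}(M,N) \geq \chi^{Q}(L,N) \geq e(N)$.

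The step I expect to be the main obstacle is the passage from $\chi^{Q}(L,N)$ to the length $\ell(L\otimes_Q N)$ in the first assertion: this rests on the non-negativity of partial Euler characteristics, a genuine input rather than a formal manipulation, and one must verify it is available in our setting (it is, because $Q$ is unramified regular). The remaining work in the second assertion—identifying $\H_1(L\lotimes_Q R)$ with $(0:_L f)$, tracking the signs through \cref{l_key}, and checking the finite-length hypotheses for each of $\chi^{R}(M,N)$, $\chi^{R}(M',N)$, and $\chi^{Q}(L,N)$—is routine but should be carried out carefully.
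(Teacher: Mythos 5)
Your proposal is correct and takes essentially the same route as the paper: reduce to $Q$ via $\ell(M\otimes_R N)=\ell(L\otimes_Q N)$, apply positivity, \cref{serreskalittheorem}, $e(L)\geq 1$, and the non-negativity of the first partial Euler characteristic $\chi_1^Q(L,N)$ (this is Lichtenbaum's theorem, which is what the paper cites--your attribution to Serre is the only slip), and for the second assertion the identical decomposition $\chi^R(M,N)=\chi^Q(L,N)+\chi^R\bigl((0:_L f),N\bigr)$ obtained from the complex $L\xrightarrow{f}L$, \cref{l_key}, and Gabber's non-negativity over the regular ring $R$. In fact your write-up of the second part is slightly cleaner than the paper's, which has a typo writing $\chi^Q(H,N)$ where $\chi^R(H,N)$ is meant (consistent with its citation of Gabber over $R$).
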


\begin{proof}
Let $L$ be a Serre lift of $M$ to an unramified regular local ring $Q$. One then has that $\dim{L}+\dim{N}=\dim{Q}.$ Thus $\chi^{Q}(L,N)>0$ by the positivity theorem, and hence Theorem \ref{serreskalittheorem} gives the second inequality below. The first inequality follows by Lichtenbaum \cite[Theorem 2]{Lichtenbaum:1966}. 
\begin{align*}
\ell(M\otimes_{R}N) = \ell(L\otimes_{Q}N) &= \chi^{Q}(L,N) + \chi_{1}^{Q}(L,N) \\
&\geq \chi^{Q}(L,N) \\
&\geq e(L)e(N) \\
&\geq e(N).
\end{align*}
This justifies the first part of the result. Since $R = Q/(f)$ and since $f$ is a nonzerodivisor on $Q$, we have $\Tor_i^Q(L,R) = 0$ for $i > 1$. Setting $H = \Tor_1^Q(L, R)$, we have  \[\chi^{R}(M,N) = \chi^{Q}(L,N) + \chi^{Q}(H,N) \geq \chi^{Q}(L,N) \geq e(N).\]The first equality is by \cref{l_key}, the first inequality follows due to the non-negativity theorem of Gabber \cite{RobertsGabber:1998}, and the second was observed above.
\end{proof}

\begin{corollary} \label{perfectCorollary}
    Suppose $M$ is a nonzero Cohen-Macaulay module of finite projective dimension over a local ring $R$. If $M$ admits a Serre lift to an unramified regular local ring, then $e(M) \geq e(R)$. 
\end{corollary}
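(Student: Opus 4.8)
The plan is to deduce this from \cref{maintheorem} by applying it with $N$ a quotient of $R$ by a cleverly chosen partial system of parameters, arranged so that the left-hand side $\ell(M\otimes_R N)$ computes $e(M)$ while the right-hand side $e(N)$ dominates $e(R)$. Write $t=\dim M$ and $d=\dim R$. First I would reduce to the case of an infinite residue field by passing to the faithfully flat extension $R\to R[X]_{\m R[X]}$; this preserves $\dim$, $\depth$, Hilbert--Samuel multiplicities, and the Cohen--Macaulay and finite projective dimension hypotheses, and a Serre lift of $M$ base-changes to a Serre lift over the (still unramified, regular) extension of $Q$, so all the hypotheses needed to invoke \cref{maintheorem} persist.

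With infinite residue field I would choose $\x=x_1,\dots,x_t\in\m$ to be a superficial sequence for $\m$ with respect to both $M$ and $R$; for a general choice of $\x$ one simultaneously arranges that $\ell(M/\x M)<\infty$ (so that $\x$ is a system of parameters for $M$) and that $\dim R/(\x)=d-t$ (so that $\x$ is part of a system of parameters for $R$). Setting $N=R/(\x)$ then gives $\dim M+\dim N=t+(d-t)=d=\dim R$ and $M\otimes_R N=M/\x M$ of finite length, which are exactly the hypotheses of \cref{maintheorem}.

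The two multiplicity computations are then standard. Since $M$ is Cohen--Macaulay and $\x$ is a system of parameters for $M$, the sequence $\x$ is $M$-regular, so $\ell(M/\x M)=e(\x;M)$; and because $\x$ is a maximal superficial sequence for $\m$ relative to $M$, the ideal $(\x)$ is a minimal reduction of $\m$ relative to $M$, whence $e(\x;M)=e(\m;M)=e(M)$. On the ring side, a superficial sequence that is part of a system of parameters does not decrease multiplicity: stepwise one has $e(\m;R/(x_1,\dots,x_{i-1}))=e(\m;R/(x_1,\dots,x_i))$ whenever the former module has dimension at least two, while at a final step lowering the dimension to zero one only gains $e(\m;R/(\x))=\ell(R/(\x))\ge e(\m;R/(x_1,\dots,x_{t-1}))$, so in all cases $e(N)=e(\m;R/(\x))\ge e(R)$. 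Feeding these into \cref{maintheorem} yields
\[
e(M)=\ell(M/\x M)=\ell(M\otimes_R N)\ge e(N)=e\big(R/(\x)\big)\ge e(R).
\]

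The main obstacle is the choice of $\x$ in the second paragraph: I need a single sequence that is simultaneously a minimal reduction of $\m$ (hence superficial) relative to $M$ and part of a system of parameters for $R$ that is superficial relative to $R$, so that both the equality $\ell(M/\x M)=e(M)$ and the inequality $e(R/(\x))\ge e(R)$ are available at once. Each of these requirements is an open dense condition on the coefficients of a general linear combination of a minimal generating set of $\m$, so a general $\x$ meets all of them simultaneously once the residue field is infinite; carefully verifying this, together with the superficial-sequence multiplicity estimate in the correct ($\ge$) direction, is where the real work lies.
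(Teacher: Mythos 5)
Your proposal is correct, and its core is the same as the paper's: reduce to an infinite residue field, choose a general sequence $\x = x_1,\dots,x_t$ of length $t=\dim M$, and apply \cref{maintheorem} with $N = R/(\x)$, using Cohen--Macaulayness of $M$ to identify $\ell(M\otimes_R N)=\ell(M/\x M)$ with $e(M)$. The one genuine difference is on the ring side. The paper begins by invoking Roberts' theorem (a consequence of the New Intersection Theorem) to conclude that $R$ itself is Cohen--Macaulay, since it admits a nonzero Cohen--Macaulay module of finite projective dimension, and then cites \cite[Corollary 4.6.10]{Bruns/Herzog:1998} to get the \emph{equality} $e(R/(\x))=e(R)$. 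You bypass the Cohen--Macaulayness of $R$ entirely and instead use the superficial-element multiplicity facts valid over an arbitrary local ring: $e(\m;R/(x_1,\dots,x_i)) = e(\m;R/(x_1,\dots,x_{i-1}))$ while the dimension stays at least two, with only the inequality $\ell(R/(\x)) \geq e(\m;R/(x_1,\dots,x_{t-1}))$ at a final dimension-zero step (which occurs only when $t=d$), yielding $e(R/(\x))\geq e(R)$ -- which is all that is needed. This makes your argument slightly more self-contained, as it avoids the New Intersection Theorem input, at the cost of needing the non-Cohen--Macaulay form of the superficial-sequence multiplicity statements (standard, e.g.\ in Huneke--Swanson, but requiring the care you flag about the direction of the inequality at the last step); the paper's route is shorter given the results it cites.
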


\begin{proof}
Since $R$ admits a nonzero Cohen-Macaulay module of finite projective dimension, $R$ is Cohen-Macaulay by \cite[Theorem 6.2.3, Proposition 6.2.4]{Roberts:1998}. Observe that we may also assume $R$ has an infinite residue field and choose sufficiently general elements $\x = x_1, \ldots, x_r \in \m \setminus \m^2$ such that $e(M) = \ell_R(M/(\x)M)$ and $e(R) = e(R/(\x)R)$; see \cite[Corollary 4.6.10]{Bruns/Herzog:1998}. Then ${L \otimes_Q R/(\x) \cong M/(\x)M}$ and $\dim L + \dim R/(\x) = \dim Q$. Now apply \cref{maintheorem}. 
\end{proof}

\begin{remark}\label{mainquestion}
     For Serre's positivity conjecture, we may assume $R$ is a ramified regular local ring such that $R = Q/(f)$ for some unramified regular local ring $(Q, \m)$ and $f \in \m \setminus \m^2$. Furthermore, we may assume that the intersecting modules are of the form $R/\p$ for $\p \in \Spec R$. We do not know whether there exists a $\p \in \Spec R$ such that $R/\p$ does not Serre lift to $Q$. 
\end{remark}

\begin{chunk}

Following \cite[2.6]{Iyengar/Ma/Walker:2022}, a \textit{short complex} $F$ over a local ring $R$ is a complex \[ F: 0 \to F_d \to \cdots \to F_0 \to 0\] such that $d = \dim R$, each $F_i$ is a finite free $R$-module, and $\ell_R(\H(F)) < \infty$.  Roberts constructed a short complex $F$ with $ \chi(F) = \sum_{i\geq 0}(-1)^i \ell_R(\H_i(F)) < 0$ \cite{Roberts:1989}. For our purposes, a more suitable invariant is the Dutta multiplicity, denoted by $\chi_{\infty}(F)$; see \cite[4.17]{Iyengar/Ma/Walker:2022} and \cite{Dutta:1983, Kurano:2001a, Kurano/Roberts:2000, Roberts:1998} for more details. It has been conjectured by Kurano \cite[Conjecture 3.1]{Kurano:1993} that $\chi_{\infty}(F) > 0$ for $F$ a short complex with nonzero homology over a local ring. While this remains open in mixed characteristic, Iyengar, Ma, and Walker have conjectured an even tighter lower bound \cite[Conjecture 2]{Iyengar/Ma/Walker:2022}: over a local ring $R$ any short complex $F$ with nonzero homology satisfies $ \chi_{\infty}(F) \geq e(R).$ An affirmative answer to this conjecture implies Lech's conjecture holds in full generality \cite[Proposition 8.3]{Iyengar/Ma/Walker:2022}.
\end{chunk}

\begin{theorem}
Suppose $R$ is a local ring of prime characteristic $p>0$ that is $F$-finite. Suppose $F$ is a short complex over $R$ with $\H_0(F) \not= 0$. If the $R$-module $\H_0(F)$ Serre lifts to a regular local ring $Q$ of characteristic $p>0$, then $\chi_{\infty}(F) \geq e(R)$. 
\end{theorem}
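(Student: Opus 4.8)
The plan is to evaluate $\chi_{\infty}(F)$ through the Frobenius functor and then transport the whole computation to the regular ring $Q$ carrying the Serre lift, where Serre's positivity theorem and \cref{serreskalittheorem} are available exactly as in the proof of \cref{maintheorem}. Put $d=\dim R$; since $R$ is $F$-finite of characteristic $p$, one has $\chi_{\infty}(F)=\lim_{e}p^{-ed}\chi(\mathsf F^{e}_{R}F)$, where $\mathsf F^{e}_{R}$ is the $e$-th Frobenius functor. Let $Q$ be the regular local ring of characteristic $p$ that surjects onto $R$ and carries a Serre lift $L$ of $M=\H_{0}(F)$, and set $n=\dim Q$; as $M$ has finite length, $\dim L=n-d$. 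Lift $F$ to a length-$d$ complex $\widetilde F$ of free $Q$-modules with $\widetilde F\otimes_{Q}R=F$ by lifting its matrices. Frobenius commutes with base change along $Q\twoheadrightarrow R$ and is exact over the regular ring $Q$ (Kunz), so $\mathsf F^{e}_{R}F=(\mathsf F^{e}_{Q}\widetilde F)\otimes_{Q}R$, whence
\[\chi(\mathsf F^{e}_{R}F)=\chi^{Q}(\mathsf F^{e}_{Q}\widetilde F,R),\]
and all the relevant $\Tor$'s have finite length because $\H(F)$ does.

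I would then split off the homology. Since $\pdim_{Q}R<\infty$, \cref{l_key} (with $R$ in the role of the finite-projective-dimension module) together with the exactness of $\mathsf F^{e}_{Q}$ gives
\[\chi^{Q}(\mathsf F^{e}_{Q}\widetilde F,R)=\sum_{i}(-1)^{i}\chi^{Q}\big(\mathsf F^{e}_{Q}\H_{i}(\widetilde F),R\big).\]
Frobenius preserves the dimension of support and scales multiplicity by codimension, $e(\mathsf F^{e}_{Q}T)=p^{e\codim T}e(T)$. Dividing by $p^{ed}$ and letting $e\to\infty$, Serre's vanishing theorem makes every summand with $\dim\H_{i}(\widetilde F)+\dim R<n$ vanish identically, the summands with $\dim\H_{i}(\widetilde F)>n-d$ die in the limit for codimension reasons, and only the summands with $\dim\H_{i}(\widetilde F)=n-d$ persist, each with limit bounded below by $e(\H_{i}(\widetilde F))\,e(R)$ via \cref{serreskalittheorem}. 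The hypothesis of Serre liftability enters here: it ensures that the degree-zero contribution is computed by a lift of $M$ of the maximal dimension $n-d$, complementary to $R$ over $Q$, so that positivity applies and this piece contributes $\lim_{e}p^{-ed}\chi^{Q}(\mathsf F^{e}_{Q}\widetilde M,R)\ge e(\widetilde M)e(R)\ge e(R)$, precisely as in \cref{maintheorem}.

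The remaining difficulty—and what I expect to be the crux—is to rule out or absorb the \emph{higher} surviving terms: $(n-d)$-dimensional homology of $\widetilde F$ in degrees $i\ge1$ contributes with sign $(-1)^{i}$, so the clean degree-zero bound is threatened. This is exactly the step that must leverage the fact that the hypothesis constrains only $\H_{0}(F)$, and it is the analogue of how the proof of \cref{maintheorem} discarded the non-negative corrections $\chi^{Q}_{1}(L,N)$ and $\chi^{Q}(H,N)$. The natural attack is a concentration statement: apply the Peskine--Szpiro acyclicity lemma to $\widetilde F_{\mathfrak p}$ at each height-$d$ prime $\mathfrak p$ to force $\H_{i}(\widetilde F)_{\mathfrak p}=0$ for $i\ge1$, so that the top-dimensional homology lives only in degree $0$ and $\chi_{\infty}(F)=\lim_{e}p^{-ed}\chi^{Q}(\mathsf F^{e}_{Q}\widetilde M,R)$. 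The delicate point is that $\H_{i}(\widetilde F)$ need not have finite-length localization at such $\mathfrak p$—the universal-coefficient spectral sequence $\Tor^{Q}_{q}(\H_{p}(\widetilde F),R)\Rightarrow\H_{p+q}(F)$ controls the homology of $F$, not the support of $\H_{p}(\widetilde F)$ itself—so the depth hypotheses of the acyclicity lemma must be verified with genuine care, and may force a careful choice of the lift $\widetilde F$ or of $L$ among the Serre lifts.

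Should the concentration argument prove too fragile, an alternative is to bound $\chi_{\infty}(F)$ from below by the contribution of the Serre lift directly. Taking $L$ Cohen--Macaulay of dimension $n-d$ (so $\pdim_{Q}L=d$), the complex $F_{L}:=G_{\bullet}\otimes_{Q}R$ built from a minimal $Q$-free resolution $G_{\bullet}$ of $L$ is again a short complex with $\H_{0}(F_{L})=M$, and the computation of the first two paragraphs is unobstructed for it, giving $\chi_{\infty}(F_{L})=\lim_{e}p^{-ed}\chi^{Q}(\mathsf F^{e}_{Q}L,R)\ge e(L)e(R)\ge e(R)$. It would then remain to compare $\chi_{\infty}(F)$ with $\chi_{\infty}(F_{L})$, discarding the excess homology via Gabber's non-negativity theorem, to transfer the bound to the given $F$. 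Either way the proof hinges on converting the single hypothesis on $\H_{0}(F)$ into control of the full Dutta multiplicity, and it is there that I expect the real work to lie.
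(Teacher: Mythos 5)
Your primary route breaks at its first substantive step. A finite free $R$-complex cannot, in general, be lifted to a complex over $Q$ ``by lifting its matrices'': entrywise lifts of the differentials satisfy $\widetilde{d}_i\widetilde{d}_{i+1}\equiv 0$ only modulo $\ker(Q\twoheadrightarrow R)$, and the obstruction to correcting them to genuine differentials is exactly the classical obstruction to lifting complexes discussed in \cref{obstructionforCI}; indeed, \cref{mainExample} of this very paper exhibits a finite free $R$-complex whose naive lift has $\widetilde{d}^{\,2}=(z^2-xy)\,C$ for a nonzero matrix $C$, so it is not a complex at all. Even if a complex lift $\widetilde{F}$ happened to exist, your argument conflates two different lifts of $M=\H_0(F)$: the hypothesis supplies a Serre lift $L$ as a \emph{module}, while your degree-zero term involves $\H_0(\widetilde{F})$, which is some module with $\H_0(\widetilde{F})\otimes_Q R\cong M$ but with no reason to have dimension $\dim Q-\dim R$; if its dimension is smaller, that term is killed by Serre's vanishing theorem instead of contributing $\geq e(R)$. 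Together with the sign problem for higher homology that you yourself flag as unresolved, the main route is irreparable as stated.

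Your fallback paragraph is, in outline, the paper's actual proof, but the two points you leave open are precisely where the content lies, and one of your proposed tools is the wrong one. The paper takes the minimal $Q$-free resolution $G$ of the \emph{given} Serre lift $L$ --- no Cohen--Macaulay assumption, hence no constraint on the length of $G$ and no need to ``choose $L$ CM'' (a choice you cannot justify, since $L$ is handed to you by the hypothesis) --- and writes, for each $e$,
\[\chi^{Q}(F^eG\otimes_Q R)=\ell\bigl(\H_0(F^eG\otimes_Q R)\bigr)-\chi_1^{Q}(F^eG\otimes_Q R).\]
Since Frobenius and base change are right exact, $\H_0(F^eG\otimes_Q R)\cong F^e_R M\cong \H_0(F^eF)$. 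Dividing by $p^{ed}$ and letting $e\to\infty$: the left-hand side tends to $\chi_\infty(L,R)=\chi^{Q}(L,R)$ by Dutta's remark over regular rings \cite[1.3 Remark]{Dutta:1983}; the correction $\chi_1^{Q}(F^eG\otimes_Q R)$ is nonnegative by Lichtenbaum \cite[Theorem 2]{Lichtenbaum:1966} --- this statement about \emph{partial} Euler characteristics, not Gabber's non-negativity of $\chi$ itself \cite{RobertsGabber:1998}, is what discards the excess homology of $G\otimes_Q R$; and, by the proof of \cite[Theorem 7.3.5]{Roberts:1998}, because $F$ is a \emph{short} complex one has $\lim_e\ell(\H_0(F^eF))/p^{ed}=\chi_\infty(F)$, i.e.\ the higher homology of $F^eF$ grows too slowly to contribute, so the Dutta multiplicity of a short complex depends only on its $\H_0$. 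This last fact is exactly what your vague comparison of $\chi_\infty(F)$ with $\chi_\infty(F_L)$ was groping for. Assembling, $\chi_\infty(F)\geq\chi^{Q}(L,R)>0$ by positivity (available in equal characteristic $p$), and \cref{serreskalittheorem} gives $\chi^{Q}(L,R)\geq e(L)e(R)\geq e(R)$, exactly as in \cref{maintheorem}.
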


\begin{proof}
Let the $Q$-module $L$ denote a Serre lift of $\H_0(F)$ and let $G \xrightarrow{\simeq} L$ denote its minimal $Q$-free resolution.  For $e \geq 0$, \[\chi^{Q}(F^eG \otimes_Q R) = \ell(\H_0(F^eF)) - \chi_1^{Q}(F^eG \otimes_Q R).\]We have $\dim{Q}-\dim{L}= \dim R = d$, and as $Q$ is regular, $\chi_{\infty}(L, R) = \chi^{Q}(L, R)$ by \cite[1.3 Remark]{Dutta:1983}. Taking limits, \[ \chi^{Q}(L, R) = \chi_{\infty}(L, R) = \lim_{e\to \infty} \frac{\ell(\H_0(F^eF))}{p^{ed}} - \lim_{e\to \infty} \frac{\chi_1^{Q}(F^eG \otimes_Q R)}{p^{ed}}.\]By the proof of \cite[Theorem 7.3.5]{Roberts:1998}, as $F$ is a short complex, the first term on the right hand side is equal to $\chi_{\infty}(F)$. By Lichtenbaum \cite[Theorem 2]{Lichtenbaum:1966} we know $\chi_1(F^eG \otimes_Q R) \geq 0$ and therefore the second limit is at least zero. This shows that $\chi_{\infty}(F) \geq \chi(L, R) > 0$. The proof is then complete by \cref{serreskalittheorem}. 
\end{proof}

\section{Examples.}

It is well-known that modules do not lift in general; see \cite{Hochster:1975,  Dao:2007, Dutta/Hochster/McLaughlin:1985, Jorgensen:1999, Singh/Miller:2000, Peskine/Szpiro:1973, Piepmeyer/Roberts:2005, Roberts/Srinivas:2003} for examples. In this section, we show that there exists an unliftable module of finite length and finite projective dimension over a hypersurface singularity that admits a Serre lift, and a module of finite projective dimension that is not Serre liftable. 

\begin{chunk} \label{obstructionforCI}
    We begin by outlining an obstruction to liftability of modules over complete intersection rings via the notion of cohomological operators due to Gulliksen \cite{Gulliksen:1974} and Eisenbud \cite{Eisenbud:1980}. Suppose $R$ is a local complete intersection ring and assume $R$ is complete. Then $R \cong Q/(f_1, \ldots, f_c)$ where $Q$ is a regular ring and $f_1, \ldots, f_c$ a regular sequence on $Q$. There exists a ring of cohomological operators $S = k[\chi_1, \ldots, \chi_c]$ that acts on $\Ext^R(M, k)$ for any finitely generated $R$-module $M$. Following \cite{Eisenbud:1980}, one can observe that if $M$ is an $R$-module that lifts to $Q$, then $(\chi_1, \ldots, \chi_c) \Ext_R(M, k) = 0$. 
\end{chunk}

\begin{example} \label{mainExample} Let $(R, \m)$ be the two dimensional local hypersurface \[ k[[x, y, z]]/(z^2-xy) \] defined over some field $k$. Let $M$ be the module with the following presentation: \[ d_1 \colon R^4 \xrightarrow[]{ \left(\!\begin{array}{cccc}
x&0&z&y\\
y&x&y&0
\end{array}\!\right)} R^2 .\]Computing support via the $0$th Fitting ideal, one can show $\Supp(M) = \{ \m \}$. This module also has finite projective dimension. Indeed, \[ 0 \xrightarrow{} R^{2} \, \xrightarrow{\left(\!\begin{array}{cc}
-y-z&-y\\
-y&0\\
x+y+z&y\\
-z&x-z
\end{array}\!\right)} R^{4} \,\xrightarrow{\left(\!\begin{array}{cccc}
x&0&z&y\\
y&x&y&0
\end{array}\right)} R^{2} .\]is an acyclic complex: it is easy to check that this is indeed an $R$-complex and, using the Buchsbaum-Eisenbud acyclicity criterion \cite{Buchsbaum/Eisenbud:1973}, one can show that this is acyclic. Therefore $\pdim_R(M) = 2$. 

\begin{proposition}
 In the notation of \cref{mainExample}, $M$ is Serre liftable to ${S = k[[x, y, z]]}$ but it does not lift to any regular ring.
\end{proposition}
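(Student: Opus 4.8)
The plan is to prove the two assertions by different means: I would obtain the Serre lift by lifting the presentation matrix and reading off a dimension from a Fitting ideal, and I would obtain non-liftability from the cohomological-operator obstruction of \cref{obstructionforCI} after reducing to the hypersurface presentation $R=S/(f)$, $f=z^2-xy$. For the lift I take $L=\coker(\tilde d_1)$ over $S=k[[x,y,z]]$, where $\tilde d_1$ is the \emph{same} matrix $\left(\begin{smallmatrix}x&0&z&y\\ y&x&y&0\end{smallmatrix}\right)$ now read over $S$. Right-exactness of $-\otimes_S R$ gives $L\otimes_S R\cong\coker(d_1)=M$ for free, so it remains only to compute $\dim L$. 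Since $\dim S=3$, $\dim R=2$ and $\dim M=0$ (as $\Supp M=\{\m\}$), being a Serre lift is exactly the condition $\dim L=1$. I would get this from $\mathrm{Fitt}_0(L)$: the $2\times2$ minors of $\tilde d_1$ are $x^2,\ xy-yz,\ y^2,\ xz,\ xy$, so $\mathrm{Fitt}_0(L)=(x^2,y^2,xy,xz,yz)$, whose radical is $(x,y)$. Hence $\Supp L=V(x,y)$ has dimension $1$, giving $\dim S-\dim L=\dim R-\dim M=2$, so $L$ is a Serre lift of $M$.

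For non-liftability, suppose $M$ lifts to a regular local ring $Q'$ via $L'$; I would contradict this after reducing to a hypersurface presentation. Passing to the completion (faithfully flat, preserving the lifting data, and still surjecting onto the complete ring $R$) I may assume $Q'$ is complete. Because $R$ is a hypersurface, hence a complete intersection, $\ker(Q'\twoheadrightarrow R)$ is generated by a $Q'$-regular sequence $g_1,\dots,g_c$ with $c=\dim Q'-2$. Since $R$ has embedding dimension $3$, the images of the $g_i$ span a $(c-1)$-dimensional subspace of $\m_{Q'}/\m_{Q'}^2$; after a coordinate change I may take $g_2,\dots,g_c$ to extend to a regular system of parameters and $g_1\in\m_{Q'}^2$. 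Put $S'=Q'/(g_2,\dots,g_c)$, a three-dimensional regular local ring with $R=S'/(\bar g_1)$. The hypothesis $\Tor^{Q'}_i(L',R)=0$ makes $g_1,\dots,g_c$ regular on $L'$; permuting, $g_1$ is a nonzerodivisor on $L'':=L'/(g_2,\dots,g_c)L'$, and the resolution $0\to S'\xrightarrow{\bar g_1}S'\to R\to0$ then shows $L''$ is a lift of $M$ along the hypersurface $R=S'/(\bar g_1)$. Identifying $S'$ with $S$, it suffices to rule out a lift along $R=S/(f)$.

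By \cref{obstructionforCI}, if $M$ lifted to $S$ the degree-two operator $\chi_1$ would annihilate $\Ext^*_R(M,k)$, so the crux is to exhibit $\chi_1\neq0$. As $\pdim_R M=2$, the displayed complex $F_\bullet\colon 0\to R^2\xrightarrow{d_2}R^4\xrightarrow{d_1}R^2\to0$ is the minimal resolution; all differentials lie in $\m$, so $\Hom(F_\bullet,k)$ has zero differentials and $\Ext^i_R(M,k)=\Hom(F_i,k)$ equals $k^2,k^4,k^2$ for $i=0,1,2$ and vanishes afterward. Lifting $d_1,d_2$ to $S$ by the same matrices and multiplying over $S$ gives
\[ \tilde d_1\tilde d_2=\begin{pmatrix} z^2-xy & 0\\ 0 & 0\end{pmatrix}=f\begin{pmatrix}1&0\\0&0\end{pmatrix}, \]
so the homotopy $\sigma=\left(\begin{smallmatrix}1&0\\0&0\end{smallmatrix}\right)$ representing $\chi_1\colon\Ext^0_R(M,k)\to\Ext^2_R(M,k)$ reduces to a nonzero map $k^2\to k^2$ modulo $\m$. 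Because $\Hom(F_\bullet,k)$ has vanishing differentials there is no homotopy ambiguity, whence $\chi_1\Ext^*_R(M,k)\neq0$ and $M$ does not lift to $S$. The main obstacle is precisely this last point—verifying that $\chi_1$ is nonzero \emph{on cohomology} and not merely on chosen representatives—which works cleanly here only because finite projective dimension kills the differentials of $\Hom(F_\bullet,k)$; the secondary subtlety is the reduction of the second paragraph, which relies on $R$ being a complete intersection so that its defining ideal in any regular local ring is a regular sequence.
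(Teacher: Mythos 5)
Your construction of the Serre lift and your obstruction computation are exactly the paper's: the same cokernel $L$ of the lifted matrix, the same Fitting-ideal calculation giving $\dim L=1$, and the same identity $\tilde d_1\tilde d_2=(z^2-xy)\left(\begin{smallmatrix}1&0\\0&0\end{smallmatrix}\right)$ feeding into \cref{obstructionforCI}. Where you genuinely differ is the reduction to a three-dimensional ambient ring: the paper completes $Q$ and asserts that $\widehat Q\twoheadrightarrow R$ factors through $k[[x,y,z]]\twoheadrightarrow R$, whereas you cut down by part of the regular sequence generating the kernel, using that $\Tor^{Q'}_{>0}(L',R)=0$ forces $g_1,\dots,g_c$ to be a regular sequence on $L'$ (Koszul), so the lifting data descends to $S'=Q'/(g_2,\dots,g_c)$. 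This is a legitimate alternative --- it is the same device the paper itself uses in \cref{unliftableModule} --- and your observation that minimality of $F$ kills all homotopy ambiguity, so that $\chi$ is well defined and nonzero on cohomology and not merely on representatives, is a correct sharpening of the paper's appeal to \cite[Proposition 1.2]{Eisenbud:1980}.

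The step that needs repair is ``identifying $S'$ with $S$.'' In residue characteristic zero this is true but not free: one must produce an isomorphism $S\cong S'$ \emph{compatible with the two surjections onto $R$} (lift the coefficient field and a regular system of parameters, then apply Nakayama), and then note the obstruction is unchanged when $z^2-xy$ is replaced by a unit multiple. In residue characteristic $p>0$ the identification can fail outright, because your $S'$ may be a \emph{ramified} regular local ring of mixed characteristic: with $V$ a complete DVR satisfying $V/pV=k$, the ring $T=V[[x,y,z]]/(z^2-xy-p)$ is regular of dimension three and surjects onto $R$ with kernel $(z^2-xy)=(p)\subseteq\m_T^2$, yet $T\not\cong k[[x,y,z]]$ since $T$ has characteristic $0$; your procedure can land on such an $S'$ (and must confront it when $Q'=T$ itself). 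To be fair, the paper's factoring claim stumbles on the same example: any local map $T\to k[[x,y,z]]$ kills $p$, so a compatible factoring would yield a ring-theoretic section of $k[[x,y,z]]\twoheadrightarrow R$, which is impossible by comparing initial forms. The clean fix, which rescues both arguments and needs no identification at all, is to run your computation over an arbitrary complete regular local $S'$ of dimension three surjecting onto $R$: lift $x,y,z$ to a regular system of parameters $a,b,c$ of $S'$ and lift the two matrices by the same integer-coefficient formulas, so that $\tilde d_1\tilde d_2=(c^2-ab)\left(\begin{smallmatrix}1&0\\0&0\end{smallmatrix}\right)$; since any generator $g$ of the kernel lies in $\m_{S'}^2$ and divides $c^2-ab$, whose $\m_{S'}$-adic order is exactly $2$, the element $c^2-ab$ is itself a generator, and the cohomological operator attached to this presentation again acts by a unit times $\left(\begin{smallmatrix}1&0\\0&0\end{smallmatrix}\right)$, so \cref{obstructionforCI} applies verbatim.
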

\begin{proof}
    We claim that $L$ with the following presentation is a Serre lift of $M$: \[ \widetilde d_1 \colon S^4 \xrightarrow{\left(\begin{array}{cccc}
x&0&z&y\\
y&x&y&0
\end{array}\right)} S^2.\]Clearly $L \otimes_S R \cong M$, and computing the support via the $0$th Fitting ideal, we get $\dim L = 1$. Hence $\dim{S}-\dim{L} = \dim{R}-\dim{M} = 2$. 

Now we show  $M$ does not lift to any regular ring. If $M$ is liftable to some regular ring $Q$ along a surjection $\pi : Q \twoheadrightarrow R$, then it also lifts to the completion of $Q$ at its maximal ideal, denoted $\widehat{Q}$. Furthermore, since we can factor the map ${\widehat Q \twoheadrightarrow R}$ to ${\widehat Q \twoheadrightarrow k[[x, y, z]] \twoheadrightarrow R}$, we may assume $M$ lifts to the regular local ring $S = k[[x, y, z]]$. Let $E = k[\chi]$ be the ring of Eisenbud operator that acts on $\Ext_R(M, k)$. The action of $\chi$ on $\Ext_R(M, k)$ is independent of the choice of lift of the minimal free resolution $F$ of $M$ to $S$ \cite[Proposition 1.2]{Eisenbud:1980}. Naively lifting the $R$-free resolution in \cref{mainExample} above to $S$, we can compute that \[ \widetilde d^2 = \left( \begin{array}{cc}
z^2-xy&0\\
0&0
\end{array}\!\right) = (z^2-xy) \left( \begin{array}{cc}
1&0\\
0&0
\end{array}\!\right) \]Hence, $\chi$ acts on $\Ext_R^*(M, k)$ via the matrix $\left(\begin{array}{cc}1&0\\0&0\end{array}\right)$, therefore does not annihilate it. Therefore, by \cref{obstructionforCI}, $M$ cannot lift to any regular ring.
\end{proof}

\end{example}

\begin{example} \label{unliftableModule}
    In this example, we show there exists a module of finite length and finite projective dimension that is not Serre liftable to any regular local ring. Suppose $R$ is a complete hypersurface ring of equal characteristic. Suppose an $R$-module $M$ Serre lifts to some regular ring $Q$. Since $R$ is complete and of equal characteristic, we may assume so is $Q$. Further, as $R$ is a hypersurface, we can assume $R = Q/(g_1, \ldots, g_c)$ for $g_1, \ldots, g_c$ a regular sequence on $Q$ and that $g_1, \ldots, g_{c-1} \in \m \setminus \m^2$ where $\m$ is the maximal ideal of $Q$. 
    
    Let $L$ be a $Q$-module that is a Serre lift of $M$. Choosing a sufficiently general sequence of elements in $ g_1', \ldots, g_{c-1}' \in (g_1, \ldots, g_{c-1}) \setminus \m^2$, we can consider the regular ring $Q' = Q/(g_1', \ldots, g_{c-1}')$. Then the $Q'$-module $L' = L \otimes_Q Q'$ has dimension $\dim L - (c-1)$ and $L' \otimes_{Q'} R = M$. Therefore, if $M$ Serre lifts to some regular ring $Q$, we may assume it lifts to a regular ring $Q$ such that $R= Q/(f)$. 
    
    Let $k$ be a  field, and let $R = k[[x, y, z, w]]/(xw-yz)$ and $\p = (x, y)R$. In \cite{Dutta/Hochster/McLaughlin:1985}, Dutta-Hochster-McLaughlin constructued an $R$-module $M$ such that $\ell_R(M) = 15$, $\pdim_R M < \infty$, and $\chi^R(M, R/\p) = -1$. Observe that $M$ cannot lift to a regular ring. If $M$ is not Serre liftable, we are done. Otherwise, assume a $Q$-module $L$ is a Serre lift of $M$ where $Q = k[[x, y, z, w]]$. Consider the exact sequence of $Q$-modules \[ 0 \to \ann_{L}(f) \to L \xrightarrow{f} L \to M \to 0 .\]Note $M_1 \coloneqq \ann_L(f)$ is a nonzero $R$-module as $M$ is not liftable. Using the standard change of ring long exact sequence of Tors, we have $\pdim_R(M_1) < \infty$. On the other hand, since $\dim L = 1$, $\chi^Q(f; L) > 0$ whence $\ell_R(M) > \ell_R(M_1)$. Furthermore, \[ 0 = \chi^Q(L, R/\p) = \chi^R(M, R/\p) - \chi^R(M_1, R/\p) \]where the first equality follows by the vanishing theorem over regular rings and the second by \cref{l_key}. Therefore, $\chi^R(M_1, R/\p) = - 1$ and $\ell_R(M_1) < \ell_R(M)$. In particular, $M_1$ is not liftable. If $M_1$ is not Serre liftable, we have found our example. Otherwise, we run this argument to deduce the existence of an unliftable nonzero module $M_2$ of finite projective dimension such that $\ell_R(M_2) < \ell_R(M_1)$ and $\chi^R(M_2, R/\p) = -1$. Clearly, this process must terminate at some point giving us the desired example. 
\end{example}

\bibliographystyle{amsplain}
\bibliography{references}

\end{document}